\newcommand{\amsprimary}[1]{{\footnotesize\noindent AMS 2010 \textit{Mathematics subject
classification:} Primary #1\vspace{1pc}}}
\newcommand{\keywordsnames}[1]{{\footnotesize\noindent\textit{Key words:} #1\vspace{1pc}}}
\newtheorem{theorem}{Theorem}
\newtheorem{teo}{Theorem}
\newtheorem{corollary}[teo]{Corollary}
\newtheorem{lemma}[teo]{Lemma}
\theoremstyle{definition}
\theoremstyle{remark}
\title[On March's criterion]{On March's criterion for transience on rotationally symmetric manifolds}
\author{Jhon E. Bravo and Jean C. Cortissoz}
\email{jhebravobu@unal.edu.co, jcortiss@uniandes.edu.co}
\address{Department of Mathematics, Universidad Nacional de Colombia, and
Department of Mathematics, Universidad de los Andes, Bogot\'a DC, Colombia}
\date{}
\begin{document}

\maketitle

\begin{abstract}
    In this short paper we show that March's criterion
    for the existence of a bounded non constant harmonic function
    on a weak model is also a necessary and sufficient condition for the solvability of
    the Dirichlet problem at infinity on a slight generalisation of a weak model (rotationally symmetric) metric
    on $\mathbb{R}^n$.
\end{abstract}

\keywordsnames{Dirichlet problem at infinity, bounded harmonic functions, model Riemannian space.}

{\amsprimary {31C05, 53C21}}

\section{Introduction}

March in \cite{March} gave the following criterion for the transience of a rotationally symmetric Riemannian manifold,
that is for a metric on
$\mathbb{R}^n$ of the form
\[
g=dr^2+\phi^2\left(r\right)g_{\mathbb{S}^{n-1}},
\]
with $\phi$ a smooth function such that $\phi>0$ for $r>0$, $\phi\left(0\right)=0$
and $\phi'\left(0\right)=1$, and where $g_{\mathbb{S}^{n-1}}$ is the standard round metric on $\mathbb{S}^{n-1}$. $M_g=\left(\mathbb{R}^n, g\right)$ is called
a weak model. It was proved by March in \cite{March} that $M_g$
supports bounded non constant harmonic functions if and only if
\begin{equation}
\label{ineq:March}
\int_{1}^{\infty} \phi^{n-3}\left(\sigma\right) \int_{\tau}^{\infty}\phi^{1-n}\left(\tau\right)\,d\tau\,d\sigma <\infty.
\end{equation}
The behaviour of the Brownian motion on a manifold and the existence of bounded non-trivial harmonic functions
are closely related (and to the interested reader we suggest the excellent survey \cite{Grigoryan}).

In this paper we go a bit beyond the result of March and show that for metrics of the form
\begin{equation}
\label{eq:general_metric}
g=dr^2+\phi^2\left(r\right)g_{\omega},
\end{equation}
where $g_{\omega}$ is any metric on $\mathbb{S}^{n-1}$, March's criterion (\ref{ineq:March}) implies the solvability of
the Dirichlet problem on $M$ at infinity. Thus we settle the problem
of establishing a necessary and sufficient condition on a weak model so that
the Dirichlet problem at infinity is solvable. Namely, we shall prove the following result.
\begin{theorem}
\label{thm:main}
Let $M=\left(\mathbb{R}^n, g\right)$  with $g$ of the form (\ref{eq:general_metric}).
For any continuous data $f\in C\left(\mathbb{S}^{n-1}\right)$,
the Dirichlet problem at infinity is uniquely solvable
on $M$ 
if and only if
\[
\int_{1}^{\infty} \phi^{n-3}\left(\sigma\right) \int_{\tau}^{\infty}\phi^{1-n}\left(\tau\right)\,d\tau\,d\sigma <\infty.
\]
\end{theorem}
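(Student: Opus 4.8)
The plan is to separate variables, turning the Dirichlet problem at infinity into a family of one-dimensional boundary value problems indexed by the spectrum of the cross section, and then to show that March's integral is exactly the condition under which the radial factors remain bounded. First I would record the Laplace--Beltrami operator of the warped product $g=dr^2+\phi^2(r)g_\omega$. Writing $\Delta_\omega$ for the Laplacian of the fixed closed manifold $\left(\mathbb{S}^{n-1},g_\omega\right)$, one has
\[
\Delta_g=\partial_r^2+(n-1)\frac{\phi'}{\phi}\,\partial_r+\frac{1}{\phi^2}\,\Delta_\omega .
\]
Because $g_\omega$ is a smooth metric on a compact manifold, $-\Delta_\omega$ has discrete spectrum $0=\lambda_0<\lambda_1\le\lambda_2\le\cdots\to\infty$ with a complete $L^2$-orthonormal basis of smooth eigenfunctions $\{\varphi_k\}$. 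Looking for separated harmonic functions $u=h(r)\varphi_k(\theta)$ reduces harmonicity to the Sturm--Liouville equation
\[
\left(\phi^{n-1}h'\right)'=\lambda_k\,\phi^{n-3}\,h .
\]
This is the only place the metric enters, and it depends on $g_\omega$ solely through the number $\lambda_k$; this observation is what lets one pass from the round model treated by March to an arbitrary $g_\omega$ at no extra cost.

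The analytic core is the behaviour at infinity of the solution $h_\lambda$ (with $\lambda=\lambda_k>0$) that is regular at the origin. Since near $r=0$ the equation is an Euler equation whose indicial roots are $\tfrac{1}{2}\left(-(n-2)\pm\sqrt{(n-2)^2+4\lambda}\right)$, the regular solution satisfies $h_\lambda(0)=0$; moreover it is positive and increasing because $\left(\phi^{n-1}h_\lambda'\right)'=\lambda\phi^{n-3}h_\lambda\ge 0$ forces $\phi^{n-1}h_\lambda'$ to increase from $0$. Integrating twice yields
\[
h_\lambda(r)=\lambda\int_0^r\phi^{1-n}(t)\int_0^t\phi^{n-3}(s)\,h_\lambda(s)\,ds\,dt ,
\]
and a Gr\"onwall-type estimate (bounding $h_\lambda$ by its supremum for the upper bound, and by a positive constant on $[s_0,\infty)$ for the lower bound) shows that $h_\lambda$ stays bounded as $r\to\infty$ if and only if the tail
\[
\int_1^\infty\phi^{1-n}(t)\int_1^t\phi^{n-3}(s)\,ds\,dt<\infty .
\]
By Fubini's theorem this is identical to $\int_1^\infty\phi^{n-3}(\sigma)\int_\sigma^\infty\phi^{1-n}(\tau)\,d\tau\,d\sigma<\infty$, which is March's criterion. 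Hence March's integral converges if and only if, for every $\lambda>0$, $h_\lambda$ has a finite and necessarily positive limit $h_\lambda(\infty)$.

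With this equivalence in hand, both implications follow. For necessity, suppose March's integral diverges, so $h_{\lambda_k}(r)\to\infty$ for all $k\ge 1$. Any bounded harmonic function that is smooth at the origin expands as $u=\sum_k a_k(r)\varphi_k$ with each $a_k$ a scalar multiple of the regular solution $h_{\lambda_k}$; boundedness then forces $a_k\equiv 0$ for $k\ge1$, while the $k=0$ regular solution is constant (the equation $\left(\phi^{n-1}a_0'\right)'=0$ together with regularity at the origin gives $a_0'\equiv0$). Thus every bounded harmonic function is constant, and the Dirichlet problem at infinity cannot be solved for non-constant data. For sufficiency, assuming convergence I would normalise $\tilde h_k=h_{\lambda_k}/h_{\lambda_k}(\infty)$, so that $0\le\tilde h_k\le 1$ increases from $0$ to $1$, and, given $f=\sum_k c_k\varphi_k$, set $u=\sum_k c_k\,\tilde h_k(r)\,\varphi_k(\theta)$. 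Interior smoothness is automatic by elliptic regularity once convergence is known, the bound $\lvert u\rvert\le\lVert f\rVert_\infty$ follows from the maximum principle, and uniqueness comes from applying the maximum principle on the exhausting balls $\{r\le R\}$ to a bounded harmonic function tending to $0$ at infinity.

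The hard part will be the continuity up to the sphere at infinity, namely that $u(r,\cdot)\to f$ as $r\to\infty$. For this I would derive a quantitative decay estimate for $1-\tilde h_k(r)$ from the integral equation above---uniform enough in $k$ to sum the series when $f$ is smooth (where the coefficients $c_k$ decay rapidly by the spectral theory of $\Delta_\omega$)---and then pass to merely continuous $f$ by uniform approximation, using that the solution operator $f\mapsto u$ is a contraction in the sup norm. This last approximation step, together with verifying that the decay of $1-\tilde h_k$ is controlled uniformly across the modes, is the main technical obstacle; everything else reduces to the ODE dichotomy established in the second paragraph.
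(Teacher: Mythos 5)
Your proposal is correct, and its skeleton---separation of variables, reduction of solvability to boundedness of the regular radial solutions, and Fubini/Tonelli to identify the resulting double integral with March's---is the same as the paper's. Within that skeleton there are two real differences worth noting. First, your quantitative core is a Gr\"onwall bound applied to the integral equation $h_\lambda(r)=\lambda\int_0^r\phi^{1-n}(t)\int_0^t\phi^{n-3}(s)h_\lambda(s)\,ds\,dt$, whereas the paper substitutes $\varphi_m(s)=B\exp\bigl(\int_1^s\lambda_m^2\phi^{1-n}(\tau)x(\tau)\,d\tau\bigr)$ and uses the resulting Riccati equation $x'+\lambda_m^2\phi^{1-n}x^2=\phi^{n-3}$, hence $x'\le\phi^{n-3}$; the two routes produce the identical exponential bound, so this difference is cosmetic. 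Second, and more substantively, for the necessity direction the paper simply defers to March---whose theorem is stated for the round metric $g_{\mathbb{S}^{n-1}}$---while you prove necessity directly: expand an arbitrary bounded harmonic function as $\sum_k a_k(r)\varphi_k$, use regularity at the origin (Frobenius indicial roots of opposite sign) to conclude each $a_k$ is a scalar multiple of the regular solution, and use divergence of the integral to force $a_k\equiv0$ for $k\ge1$. This is self-contained and valid for arbitrary $g_\omega$, so on this point your argument is arguably more complete than the paper's citation. One remark: the step you single out as the main technical obstacle (continuity up to the sphere at infinity) is easier than you fear, and no decay rate for $1-\tilde h_k$ uniform in $k$ is needed. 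Since $0\le\tilde h_k\le1$ and, for smooth $f$, Peetre's theorem gives $\sum_k|c_k|\,\|\varphi_k\|_\infty<\infty$, a three-epsilon argument (cut the series at a finite $N$ so the tail is uniformly small, then let $r\to\infty$ using termwise convergence $\tilde h_k(r)\to1$) already yields uniform convergence of $u(r,\cdot)$ to $f$; merely continuous data then follow by uniform approximation together with the sup-norm contraction property you mention, which is exactly the scheme the paper adopts from Cortissoz's earlier work.
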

It must be observed that the existence of bounded non-constant harmonic functions do not imply the
solvability of the Dirichlet problem at infinity, as Ancona shows in \cite{Ancona}.
Theorem \ref{thm:main} implies an almost sharp restriction on the curvature of a Cartan-Hadamard manifold
whose metric is of the form (\ref{eq:general_metric}) and where the Dirichlet problem at infinity is
solvable (see Theorem 2 in \cite{March}). Namely, following March's arguments, we have:
\begin{corollary}
Let $M=\left(\mathbb{R}^n,g\right)$ with a metric of the form (\ref{eq:general_metric}), and let $k\left(r\right)=-\phi''/\phi\left(r\right)$
be its radial curvature, $k\left(r\right)\leq 0$. Let $c_2=1$ and $c_n=\frac{1}{2}$ for $n\geq 3$. Then 
if $k\left(r\right)\leq -c/\left(r^2\log r\right)$ outside a compact set
for some $c>c_n$, the Dirichlet problem
at infinity is solvable. If 
$k\left(r\right)\geq -c/\left(r^2\log r\right)$ for some $c<c_n$ outside a compact set
 then the Dirichlet problem at infinity is not solvable.
\end{corollary}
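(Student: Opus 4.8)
The plan is to deduce the corollary from Theorem~\ref{thm:main}, which reduces solvability of the Dirichlet problem at infinity to convergence of the integral
\[
A:=\int_{1}^{\infty}\phi^{n-3}(\sigma)\int_{\sigma}^{\infty}\phi^{1-n}(\tau)\,d\tau\,d\sigma .
\]
Since $k\le 0$ the warping factor $\phi$ is convex with $\phi'\ge 1$, and since both hypotheses are assumed only on some $[r_{0},\infty)$, only the tail of $A$ matters. The heart of the argument is to replace $A$ by a criterion that is \emph{monotone} in $\phi$, so that the one-sided curvature bounds can be fed in directly. For $n=2$ one has the exact identity $A=\tfrac12\bigl(\int_{1}^{\infty}\phi^{-1}\,dr\bigr)^{2}$, so $A<\infty$ iff $\int_{1}^{\infty}\phi^{-1}\,dr<\infty$. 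For $n\ge 3$ I would prove the two-sided comparison
\[
\frac{1}{2(n-2)}\int_{1}^{\infty}\frac{dr}{\phi\,\phi'}\;\le\; A\;\le\;\frac{1}{n-2}\int_{1}^{\infty}\frac{dr}{\phi\,\phi'},
\]
so that $A<\infty$ iff $\int_{1}^{\infty}\tfrac{dr}{\phi\phi'}<\infty$.

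The upper bound follows from the tangent-line estimate $\phi(\tau)\ge\phi(\sigma)+\phi'(\sigma)(\tau-\sigma)$, which gives $\int_{\sigma}^{\infty}\phi^{1-n}\le\frac{\phi^{2-n}(\sigma)}{(n-2)\phi'(\sigma)}$ and hence $A\le\frac{1}{n-2}\int\phi^{n-3}\cdot\frac{\phi^{2-n}}{\phi'}=\frac{1}{n-2}\int\frac{dr}{\phi\phi'}$. For the lower bound I would write $A=\int_{1}^{\infty}\phi^{1-n}(\tau)\bigl(\int_{1}^{\tau}\phi^{n-3}\bigr)d\tau$ by Fubini and use that $\phi'$ is nondecreasing: since $\phi^{n-3}=\frac{(\phi^{n-2})'}{(n-2)\phi'}\ge\frac{(\phi^{n-2})'}{(n-2)\phi'(\tau)}$ on $[1,\tau]$, integration yields $\int_{1}^{\tau}\phi^{n-3}\ge\frac{\phi^{n-2}(\tau)}{2(n-2)\phi'(\tau)}$ for large $\tau$, whence $A\ge\frac{1}{2(n-2)}\int\frac{dr}{\phi\phi'}$.

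Next I would run a Sturm comparison for $\phi''=-k\phi$. The model ODE $\psi''=\frac{c}{r^{2}\log r}\psi$, after the change of variable $t=\log r$, becomes $\ddot\psi-\dot\psi=\frac{c}{t}\psi$, whose dominant solution satisfies $\psi\asymp r(\log r)^{c}$ and $\psi'\asymp(\log r)^{c}$; a direct check with $\phi=r(\log r)^{\gamma}$ in the Riccati form $w'+w^{2}=-k$, $w=\phi'/\phi$, confirms that the constant $c$ in the curvature bound is exactly the exponent of the logarithm. Because larger $-k$ forces both $\phi$ and $\phi'$ to be larger (standard comparison for $\phi''=-k\phi$ with matched data at $r_{0}$), the hypothesis $k\le -c/(r^{2}\log r)$ gives $\phi\phi'\ge\psi\psi'\asymp r(\log r)^{2c}$, while $k\ge -c/(r^{2}\log r)$ gives $\phi\phi'\le\psi\psi'\asymp r(\log r)^{2c}$. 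Feeding these into the monotone criterion and using $\int_{2}^{\infty}\frac{dr}{r(\log r)^{2c}}<\infty\iff c>\tfrac12$ (and, for $n=2$, $\int_{2}^{\infty}\frac{dr}{r(\log r)^{c}}<\infty\iff c>1$) yields convergence of $A$ when $c>c_{n}$ and divergence when $c<c_{n}$, which is the assertion.

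The main obstacle is the comparison step with the slowly varying coefficient $c/(r^{2}\log r)$: one must justify the asymptotics $\psi\asymp r(\log r)^{c}$, $\psi'\asymp(\log r)^{c}$ of the dominant solution precisely enough to pin down the \emph{exponent} of $\log r$ and not merely the power of $r$, and one must check that this dominant solution is the one selected by the positivity and monotonicity of $\phi$. The reduction to $\int\frac{dr}{\phi\phi'}$ is what makes this manageable, since it converts the nonmonotone double integral $A$ into a quantity that is monotone precisely along the curvature comparison; the genuinely different thresholds $c_{2}=1$ and $c_{n}=\tfrac12$ reflect the two distinct monotone criteria $\int\phi^{-1}$ and $\int(\phi\phi')^{-1}$, and the strict inequalities in the hypotheses are exactly what let an $\varepsilon$-loss in the comparison be absorbed without crossing the threshold.
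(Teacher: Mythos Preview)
Your proposal is correct and follows the same route as the paper: both reduce the corollary to Theorem~\ref{thm:main} together with March's analysis of when the integral $\int_{1}^{\infty}\phi^{n-3}(\sigma)\int_{\sigma}^{\infty}\phi^{1-n}(\tau)\,d\tau\,d\sigma$ converges under the curvature bounds $k\lessgtr -c/(r^{2}\log r)$. The paper does not spell out this analysis at all---it simply invokes ``March's arguments'' and cites Theorem~2 of \cite{March}---whereas you have (correctly) reconstructed it via the monotone reformulations $\int\phi^{-1}$ (for $n=2$) and $\int(\phi\phi')^{-1}$ (for $n\ge 3$) and Sturm comparison with the model $\psi\asymp r(\log r)^{c}$.
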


The study of the Dirichlet problem at infinity has a history
of many deep and beautiful results (see 
for instance \cite{A, AS, Choi, Mil, Neel, Sullivan}).

\medskip
In \cite{Co2}, the concept of $L^2$-solvability for the Dirichlet problem at infinity is also introduced.
We also a have the following result.

\begin{theorem}
Let $M=\left(\mathbb{R}^n, g\right)$  with $g$ of the form (\ref{eq:general_metric}).
For any  $f\in L^2\left(\mathbb{S}^{n-1}\right)$,
 if
\[
\int_{1}^{\infty} \phi^{n-3}\left(\sigma\right) \int_{\tau}^{\infty}\phi^{1-n}\left(\tau\right)\,d\tau\,d\sigma <\infty.
\]
then the Dirichlet problem at infinity is  $L^2$-solvable
on $M$.
\end{theorem}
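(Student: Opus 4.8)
The plan is to reduce the $L^2$ problem to the modewise information already extracted in the proof of Theorem~\ref{thm:main}, by separation of variables followed by two soft limiting arguments.

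First I would fix an $L^2(\mathbb{S}^{n-1},g_\omega)$–orthonormal basis $\{\psi_k\}_{k\ge 0}$ of eigenfunctions of $-\Delta_{g_\omega}$, with eigenvalues $0=\lambda_0<\lambda_1\le\lambda_2\le\cdots$. In geodesic polar coordinates the Laplacian of $g$ is
\[
\Delta=\partial_r^2+(n-1)\frac{\phi'}{\phi}\,\partial_r+\frac{1}{\phi^2}\,\Delta_{g_\omega},
\]
so $a(r)\psi_k(\theta)$ is harmonic exactly when
\[
\bigl(\phi^{n-1}a'\bigr)'=\lambda_k\,\phi^{n-3}\,a .
\]
Let $a_k$ denote the solution regular at the origin (so $a_k\sim r^{\alpha_k}$ with $\alpha_k\ge 0$, and $a_0\equiv\mathrm{const}$). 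Writing $p_k=\phi^{n-1}a_k'$, the identity $p_k'=\lambda_k\phi^{n-3}a_k\ge 0$ shows that, with $a_k>0$ near the origin, $p_k$ stays positive and nondecreasing; hence $a_k$ is positive and strictly increasing on $(0,\infty)$. The content of March's criterion, as established while proving Theorem~\ref{thm:main}, is precisely that the integral condition forces $a_k$ to remain bounded, with a finite positive limit $a_k(\infty)$, for \emph{every} mode. I then set $b_k:=a_k/a_k(\infty)$, so that each $b_k$ is nondecreasing with $b_k(\infty)=1$; in particular
\[
0\le b_k(r)\le 1\qquad\text{for all }k\text{ and }r,
\]
a uniform bound (with $b_0\equiv 1$) that is the engine of the whole argument.

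Next, given $f=\sum_k c_k\psi_k\in L^2(\mathbb{S}^{n-1})$ with $\sum_k|c_k|^2=\|f\|_{L^2}^2$, I would define $u(r,\theta)=\sum_k c_k\,b_k(r)\,\psi_k(\theta)$. For the partial sums $u_N$, orthonormality of $\{\psi_k\}$, the bound $b_k\le 1$, and the identity $dV=\phi^{n-1}\,dr\,d\omega$ give, on any annulus $K=\{r_1\le r\le r_2\}$,
\[
\int_K|u_N-u_M|^2\,dV=\int_{r_1}^{r_2}\phi^{n-1}\!\!\sum_{M<k\le N}|c_k|^2 b_k(r)^2\,dr\le\Bigl(\int_{r_1}^{r_2}\phi^{n-1}\,dr\Bigr)\sum_{M<k\le N}|c_k|^2,
\]
so $\{u_N\}$ is Cauchy in $L^2_{\mathrm{loc}}(M)$. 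Since each $u_N$ is harmonic, the limit $u$ is weakly harmonic, hence smooth and harmonic by elliptic regularity. Finally, because $0\le b_k\le 1$ and $b_k(r)\to 1$ for every fixed $k$,
\[
\|u(r,\cdot)-f\|_{L^2}^2=\sum_k|c_k|^2\bigl(1-b_k(r)\bigr)^2\longrightarrow 0\qquad(r\to\infty),
\]
by dominated convergence with summable majorant $\sum_k|c_k|^2$. This is exactly the assertion that $u$ solves the Dirichlet problem at infinity with $L^2$ data $f$.

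The genuine difficulty is concentrated in one place: proving that March's condition yields a \emph{bounded} regular solution $a_k$, with finite limit at infinity, for every mode. This is a global connection statement for the radial equation — the regular-at-the-origin solution must carry no growing component at infinity — and it is the analytic heart of March's theorem \cite{March} and of Theorem~\ref{thm:main}, which I would simply invoke. Granting that, the remaining steps are soft: the uniform bound $0\le b_k\le 1$ comes for free from monotonicity, the identification of the series as an honest harmonic function is furnished by $L^2_{\mathrm{loc}}$ convergence together with Weyl's lemma, and the attainment of the boundary data is a one-line dominated-convergence argument. The only point demanding care is that harmonicity of $u$ is obtained in the weak/interior sense rather than by pointwise summation, since for merely $L^2$ data the series need not converge up to the boundary.
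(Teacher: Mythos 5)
Your proposal is correct and follows essentially the same route as the paper: separation of variables, positivity and monotonicity of the regular radial solutions, their boundedness under March's condition (exactly the content of the paper's Lemma), normalization so that $0\le b_k\le 1$ with $b_k\to 1$, and then the soft $L^2$ limiting arguments. The only difference is that you write out the $L^2_{\mathrm{loc}}$ Cauchy estimate, Weyl's lemma, and the dominated-convergence attainment of the boundary data, whereas the paper delegates precisely these details to its reference \cite{Co2}.
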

For the definition of $L^2$-solvability we refer the reader to Section \ref{sect:dirichlet_at_infinity} of this
paper and to \cite{Co2}.

\medskip
The layout of this paper is
as follows. In Section \ref{sect:dirichlet_at_infinity} we define what we understand by solving the Dirichlet problem at infinity;
in Section \ref{sect:proof_main} we prove our main result.

\section{Preliminaries}
\label{sect:dirichlet_at_infinity}

To define what we mean by the Dirichlet problem being solvable at infinity, we represent 
$\mathbb{R}^n$ as a cone over the sphere $\mathbb{S}^{n-1}$. That is
\[
\mathbb{R}^n \sim \left[0,\infty\right)\times \mathbb{S}^{n-1}/\left(\mathbb{S}^{n-1}\times\left\{0\right\}\right).
\]
We then can compactify $\mathbb{R}^n$ by defining
\[
\overline{\mathbb{R}^n}= \left[0,\infty\right]\times \mathbb{S}^{n-1}/\left(\mathbb{S}^{n-1}\times\left\{0\right\}\right),
\]
where $\left[0,\infty\right]$ is a compactification of $\left[0,\infty\right)$.

We say that the Dirichlet problem on 
$M=\left(\mathbb{R}^n,g\right)$ is solvable at infinity for boundary data $f\in C\left(\mathbb{S}^{n-1}_{\infty}\right)$,
if there is a harmonic function $u$ on $M$, that is $\Delta_g u= 0$ on $M$, which extends continuously to
a function $\overline{u}:\overline{\mathbb{R}^n}\longrightarrow \mathbb{R}$
and such that $\overline{u}\left(\infty,\cdot\right)=f\left(\cdot\right)$.

Our definition of solvability at infinity coincides with that of Choi \cite{Choi} on Cartan-Hadamard manifolds.
Indeed, 
for a Cartan-Hadamard manifold, 
the sphere at infinity given by the Eberlein-O'Neill compactification is obtained by 
adding at infinity the equivalence classes of 
geodesic rays starting at the pole
(a point around which the metric can be written as in (\ref{eq:general_metric})), where two geodesic rays are equivalent if 
$\limsup_{t\rightarrow \infty} d\left(\gamma_1\left(t\right),\gamma_2\left(t\right)\right)<\infty$,
and then endowing this set with cone topology (see \cite{Choi}); therefore,
if we identify the pole of the manifold 
with the vertex of the cone (that is, the equivalence
class of $\mathbb{S}^{n-1}\times \left\{0\right\}$), the set of
equivalence classes of geodesics can be identified with the unit sphere on $T_p M$, where $p$ is
a pole of the manifold,
which in turn implies
that it can be identified with $\mathbb{S}^{n-1}_{\infty}=\mathbb{S}^{n-1}\times \left\{\infty\right\}$.
From this our assertion follows.

\medskip
Next we recall the definition of $L^2$-solvability.
Given $f\in L^{2}\left(\mathbb{S}^{n-1}\right)$
we say that the Dirichlet problem is $L^2$-solvable at infinity with
boundary data $f\in L^{2}\left(\mathbb{S}^{n-1}\right)$ if there is a function $u:M\longrightarrow \mathbb{R}$
which is harmonic, and such that
\[
\lim_{r\rightarrow \infty} u\left(\omega, r\right)= f\left(\omega\right)
\quad 
\mbox{strongly in} \quad L^{2}\left(\mathbb{S}^{n-1}\right).\]

\section{Proof of the main result}
\label{sect:proof_main}

We shall concentrate on proving that March's
criterion implies the solvability of the Dirichlet
problem as the other direction is already proved in March's paper \cite{March}.
We will follow the approach implemented in \cite{Co2}, so we use separation of variables to find solutions to
the equation
\[
\Delta_g u=0.
\]
With this purpose in mind, we let
 $f_{m,k}$, $k=0,1,2,\dots, k_m$, be eigenfunctions of the $m$-th eigenvalue, $\lambda_m^2$,
 $m=0,1,2,\dots$, of the Laplacian $\Delta_{g_{\omega}}$ on $\left(\mathbb{S}^{n-1},g_{\omega}\right)$,
 and 
 such that the set $\left\{f_{m,k}\right\}_{m,k}$ is an orthonormal basis for $L^{2}\left(\mathbb{S}^{n-1}\right)$
 with respect to the inner product induced by the metric $g_{\omega}$. Then, if $\varphi_m$ is such that
 \[
 \Delta_g \left(\varphi_m f_{m,k}\right)=0,
 \]
 the equation that $\varphi_m$ must satisfy is the radial Laplacian equation
 \begin{equation}
 \label{eq:radial_laplacian}
 \varphi_m''+\left(n-1\right)\frac{\phi'}{\phi}\varphi_m'-\frac{\lambda_m^2}{\phi^2}\varphi_m=0.
 \end{equation}
It is proved in \cite{Co2} that
we may assume that $\varphi_0=1$, and that $\varphi_m$ for $m>0$ has the form
\[
\varphi_m\left(r\right)=r^l \alpha\left(r\right),
\]
where $l=\dfrac{-\left(n-2\right)+\sqrt{\left(n-2\right)^2+4\lambda_m^2}}{2}>0$ satisfies the indicial equation 
\[
l\left(l-1\right)+\left(n-1\right)l-\lambda_m^2=0,
\]
and $\alpha$ is a smooth function.
Also, it is shown that $\varphi_m$ can be chosen so that it is nondecreasing, and thus $\varphi_m\geq 0$. 
Therefore, as argued in \cite{Co2}, to show solvability of the Dirichlet
problem at infinity, it suffices to show that the $\varphi_m$ are bounded.

Indeed, let us sketch why this is so (for details we ask the reader
to consult \cite{Co2}) once we have proved that the $\varphi_m$'s are bounded we may without loss of
generality assume that $0\leq \varphi_m\leq 1$
and that $\lim_{r\rightarrow\infty}\varphi_m\left(r\right)=1$. Given $f\in C\left(\mathbb{S}^{n-1}_{\infty}\right)$
we can expand it in a Fourier series:
\[
\sum_{m,k} c_{m,k}f_{m,k}\left(\omega\right).
\]
If $f$ is smooth, a theorem of Peetre \cite{Peetre} guarantees that this series converges absolutely and uniformly
to $f$. Thus a harmonic function solving the Dirichlet problem at infinity with boundary data $f$ is given by
\[
u\left(r,\omega\right)=\sum_{m} \varphi_m\left(r\right) \sum_{k} c_{m,k}f_{m,k}\left(\omega\right).
\]
We call $u$ a \emph{harmonic extension} of $f$.
Next, if $f$ is just continuous, an approximation argument using smooth functions gives the
existence of a harmonic extension or $f$. Uniqueness follows from the Maximum Principle (see \cite{Co2}).

\medskip
In the case of $f\in L^2$, again, as proved in \cite{Co2}, the boundedness of $\varphi_m$ implies
solvability in this case.

\medskip
All we are left to show then is the boundedness of the $\varphi_m$'s, which do next.
\begin{lemma}
Given a solution $\varphi_{m}$ to the radial Laplacian (\ref{eq:radial_laplacian}), there are constants $A, B>0$ such that 
the following bound holds
\[
\varphi_{m}\left(s\right)\leq 
B\exp\left(\int_{1}^{s}\frac{\lambda_m^2}{\phi^{n-1}\left(\tau\right)}\left(A+\int_{1}^{\tau}\phi^{n-3}\left(\sigma\right)\, d\sigma
\right)\, d\tau\right),
\]
for all $s\geq 1$.
\end{lemma}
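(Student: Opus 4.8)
The plan is to exploit the fact that the radial Laplacian (\ref{eq:radial_laplacian}) is, in disguise, a Sturm--Liouville equation in divergence form, and then to convert the resulting integral identity into a Gronwall inequality. First I would multiply (\ref{eq:radial_laplacian}) by the integrating factor $\phi^{n-1}$ and observe that the first two terms combine into a perfect derivative, since $(n-1)\phi^{n-2}\phi'=\left(\phi^{n-1}\right)'$, which yields
\[
\left(\phi^{n-1}\varphi_m'\right)'=\lambda_m^2\,\phi^{n-3}\varphi_m .
\]
Integrating this identity from $1$ to $\tau$ gives
\[
\phi^{n-1}(\tau)\varphi_m'(\tau)=\phi^{n-1}(1)\varphi_m'(1)+\lambda_m^2\int_1^\tau\phi^{n-3}(\sigma)\varphi_m(\sigma)\,d\sigma .
\]

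Next I would use the two structural facts recorded above: that $\varphi_m$ is nondecreasing, so that $\varphi_m(\sigma)\le\varphi_m(\tau)$ for $\sigma\le\tau$ and the boundary term $C:=\phi^{n-1}(1)\varphi_m'(1)$ is nonnegative, and that $\varphi_m$ is strictly positive on $(0,\infty)$. Pulling the factor $\varphi_m(\tau)$ out of the integral and dividing by $\phi^{n-1}(\tau)$ produces the differential inequality
\[
\varphi_m'(\tau)\le\frac{C}{\phi^{n-1}(\tau)}+\frac{\lambda_m^2\,\varphi_m(\tau)}{\phi^{n-1}(\tau)}\int_1^\tau\phi^{n-3}(\sigma)\,d\sigma .
\]
The purpose of the strict positivity is that, since $\varphi_m$ is nondecreasing, $\varphi_m(\tau)\ge\varphi_m(1)>0$ for $\tau\ge 1$, so the free constant term can be absorbed by the estimate $C\le \left(C/\varphi_m(1)\right)\varphi_m(\tau)$; setting $A:=C/\left(\lambda_m^2\varphi_m(1)\right)$, and enlarging it if necessary to ensure $A>0$, this becomes the clean homogeneous form
\[
\varphi_m'(\tau)\le\frac{\lambda_m^2\,\varphi_m(\tau)}{\phi^{n-1}(\tau)}\left(A+\int_1^\tau\phi^{n-3}(\sigma)\,d\sigma\right).
\]

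Finally I would apply Gronwall's inequality; equivalently, since $\varphi_m>0$ one may integrate $\varphi_m'/\varphi_m=(\log\varphi_m)'$ from $1$ to $s$, which is legitimate precisely because of the positivity. This gives
\[
\varphi_m(s)\le\varphi_m(1)\exp\left(\int_1^s\frac{\lambda_m^2}{\phi^{n-1}(\tau)}\left(A+\int_1^\tau\phi^{n-3}(\sigma)\,d\sigma\right)d\tau\right),
\]
which is exactly the claimed bound with $B:=\varphi_m(1)$.

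I expect the genuinely delicate point to be justifying the strict positivity $\varphi_m>0$ on $(0,\infty)$, which is needed both to divide by $\varphi_m$ and to absorb the constant. This follows because $\varphi_m(r)=r^l\alpha(r)$ is a nontrivial solution which is nondecreasing with $\varphi_m(0)=0$: a zero at some $r_0>0$ would, by monotonicity, force $\varphi_m\equiv 0$ on $[0,r_0]$ and hence $\varphi_m'(r_0)=0$, and then ODE uniqueness at the regular point $r_0$ (the coefficients of (\ref{eq:radial_laplacian}) being smooth away from the singular point $r=0$) would make $\varphi_m$ vanish identically, a contradiction. Everything else is routine; note in particular that the statement only asks for constants $A,B>0$ that may depend on $m$, so no uniformity in $m$ needs to be tracked, and the trivial case $m=0$, where $\varphi_0=1$, is immediate.
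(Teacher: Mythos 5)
Your proof is correct, but it takes a different route from the paper's. The paper makes the Riccati-type substitution $\varphi_m(s)=B\exp\bigl(\int_1^s\frac{\lambda_m^2}{\phi^{n-1}(\tau)}x(\tau)\,d\tau\bigr)$, i.e.\ it works with $x=\phi^{n-1}\varphi_m'/(\lambda_m^2\varphi_m)$, derives the Riccati equation $x'+\frac{\lambda_m^2}{\phi^{n-1}}x^2=\phi^{n-3}$, and then simply discards the nonnegative quadratic term to get $x'\leq\phi^{n-3}$; integrating gives the lemma. You instead integrate the equation in divergence form $(\phi^{n-1}\varphi_m')'=\lambda_m^2\phi^{n-3}\varphi_m$, use the monotonicity of $\varphi_m$ to pull $\varphi_m(\tau)$ out of the integral and to absorb the boundary term, and close with Gronwall. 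Both arguments end up establishing the very same bound on the logarithmic derivative, namely $\phi^{n-1}\varphi_m'/(\lambda_m^2\varphi_m)\leq A+\int_1^\tau\phi^{n-3}(\sigma)\,d\sigma$, but by different mechanisms: in the paper the sign of the $x^2$ term does the work and monotonicity is never invoked, while in your version monotonicity (quoted from \cite{Co2}, so legitimately available) replaces the Riccati structure, and the argument stays entirely linear. A point in your favor: both proofs need $\varphi_m>0$ on $[1,\infty)$ — the paper to even define $x$ (equivalently, to write $\varphi_m$ as $B$ times an exponential), you to divide by $\varphi_m$ — and the paper leaves this completely implicit, whereas your uniqueness argument at a regular point supplies the justification; that same argument patches the gap in the paper's own write-up. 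The main thing the paper's approach buys is independence from the monotonicity input; what yours buys is avoiding the (mildly slick) exponential ansatz and making the positivity issue explicit.
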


\begin{proof}
In order to proceed, we write
\[
\varphi_m\left(s\right) = B\exp\left(\int_1^s \frac{\lambda_m^2}{\phi^{n-1}\left(\tau\right)}x\left(\tau\right)\, d\tau\right),
\]
with $B=\varphi_m\left(1\right)$,
for $s\geq 1$, with $x$ a smooth function.  

From the equation satisfied by $\varphi_m$ we arrive at an equation for $x$:
\[
x'\left(s\right)+\frac{\lambda_m^2}{\phi^{n-1}\left(s\right)}x^2\left(s\right)=\phi^{n-3}\left(s\right),
\]
which leads us to the inequality
\[
x'\left(s\right)\leq \phi^{n-3}\left(s\right).
\]

This yields the estimate
\[
x\left(s\right)\leq A+ \int_{1}^{s} \phi^{n-3}\left(\sigma\right)\, d\sigma,
\]
for an appropriately chosen constant $A>0$, and the lemma follows.

\end{proof}

From the previous lemma it follows that if 
\[
\int_{1}^{\infty}\frac{1}{\phi^{n-1}\left(\tau\right)}\, d\tau+
\int_{1}^{\infty}\frac{1}{\phi^{n-1}\left(\tau\right)}\int_{1}^{\tau}\phi^{n-3}\left(\sigma\right)\, d\sigma \, d\tau < \infty,
\]
then $\varphi_m$ is bounded. Observe that the second integral can be rewritten as
\[
\int_{1}^{\infty} \phi^{n-3}\left(\sigma\right) \int_{\tau}^{\infty}\phi^{1-n}\left(\tau\right)\,d\tau\,d\sigma,
\]
and thus, since the inner integral must converge for almost all $\tau$, the convergence of the second integral implies the convergence of the integral
\[
\int_{1}^{\infty}\frac{1}{\phi^{n-1}\left(\tau\right)}\, d\tau,
\]
so we arrive at the fact that
\[
\int_{1}^{\infty} \phi^{n-3}\left(\sigma\right) \int_{\tau}^{\infty}\phi^{1-n}\left(\tau\right)\,d\tau\,d\sigma <\infty,
\]
implies that $\varphi_m$ is bounded. This finishes the proof of the results announced in the introduction of this 
paper.



\begin{thebibliography}{20}


\bibitem{Ancona}
A. Ancona, \textquotedblleft Convexity at infinity and Brownian motion on manifolds with unbounded negative curvature
\textquotedblright, Rev. Mat. Iberoamericana \textbf{10}, no. 1, 189–220 (1994). 

\bibitem{A}
M. ~ Anderson, \textquotedblleft The Dirichlet problem at infinity for manifolds of negative curvature
\textquotedblright,
J. Differential Geom. \textbf{18}, no. 4, 701--721 (1983). 

\bibitem{AS}
M.~ Anderson and R.~ Schoen,
\textquotedblleft Positive harmonic functions on complete manifolds of negative curvature
\textquotedblright,
Ann. of Math. (2) \textbf{121}, no. 3, 429--461 (1985).




\bibitem{Choi}
 H. Choi, \textquotedblleft In Asymptotic Dirichlet problems for harmonic functions on Riemannian manifolds
\textquotedblright, Trans. Amer. Math. Soc. \textbf{281}, no. 2, 691--716 (1984).


\bibitem{Co2}
J. C. Cortissoz, An Observation 
on the Dirichlet Problem at Infinity in Riemannian Cones.
 Nagoya Math. J. 250 (2023), 352–364.


\bibitem{Grigoryan}
A. ~Grigor'yan, \textquotedblleft 
Analytic and Geometric background of recurrence and non-explosion
of the Brownian motion on Riemannian manifolds
 \textquotedblright, Bull. Amer. Math. Soc. \textbf{36} No 2, 135--249 (1999).




\bibitem{March}
 P. ~March, \textquotedblleft Brownian motion and harmonic functions on rotationally symmetric manifolds
 \textquotedblright, Ann. Probab. \textbf{14}, no. 3, 793--801 (1986).

\bibitem{Mil}
J.~Milnor, \textquotedblleft On deciding whether a surface is parabolic or hyperbolic
\textquotedblright, Amer. Math. Monthly \textbf{84} 43--46 (1977). 


\bibitem{Neel}
R.~Neel, \textquotedblleft
Brownian Motion and the Dirichlet Problem at Infinity
on Two-dimensional Cartan-Hadamard Manifolds \textquotedblright, Potential Anal. \textbf{41},
443--462 (2014).

\bibitem{Peetre}
 J.~ Peetre, \textquotedblleft Absolute convergence of eigenfunction expansions\textquotedblright, Math. Ann. 169 (1967), 
 307--314. 

\bibitem{Sullivan}
D. ~Sullivan,\textquotedblleft
The Dirichlet problem at infinity for a negatively curved manifold \textquotedblright
J. Differential Geom. \textbf{18}, no. 4, 723--732 (1983).





\end{thebibliography}
\end{document}